\documentclass[12pt]{extarticle}
\usepackage[utf8]{inputenc}
\usepackage[english]{babel}
\usepackage{indentfirst}
\usepackage{amsmath}
\usepackage{amsfonts}
\usepackage{amssymb}
\usepackage{amsthm}
\usepackage{textcomp}
\usepackage{hyperref}
\usepackage[english]{varioref}
\usepackage[left=2.5cm,right=2.5cm,
top=2.5cm,bottom=2.5cm,bindingoffset=0cm]{geometry}
\usepackage{caption}
\usepackage{multirow}
\usepackage{mathabx}

\newtheorem{problem}{Problem}
\newtheorem{theorem}{Theorem}

\newtheorem{corollary}{Corollary}
\newtheorem{remark}{Remark}

\renewcommand{\P}{\mathbb{P}}
\renewcommand{\O}{\mathcal{O}}
\renewcommand{\mod}[3]{#1 \equiv #2 \ (\mathrm{mod} \ #3)}

\renewcommand{\t}[2]{#1\!\times\!#2}

\newcommand{\simTo}{\xrightarrow{\smash{\raisebox{-0.7ex}
			{\ensuremath{\scriptstyle\!\!\sim}}}}}

\newcommand{\cd}{\!\cdot\!}
\newcommand{\Fr}{\mathrm{Fr}}

\newcommand{\F}[1]{\mathbb{F}_{\!#1}}

\newcommand{\A}[1]{\mathbb{A}^{\!#1}}
\newcommand{\rmA}{\mathrm{A}}
\newcommand{\rmD}{\mathrm{D}}
\newcommand{\rmE}{\mathrm{E}}
\newcommand{\N}{\mathbb{N}}

\newcommand{\Z}{\mathbb{Z}}

\newcommand{\E}{\mathcal{E}}
\newcommand{\K}{\mathcal{K}}

\newcommand{\I}{\mathrm{I}}
\newcommand{\II}{\mathrm{II}}
\newcommand{\IV}{\mathrm{IV}}

\newcommand{\inv}[1]{#1^{-\!1}}

\newcommand{\MW}{\mathrm{MW}}

\newcommand{\rk}{\mathrm{rk}}

\newcommand{\Leg}[2]{\big(\frac{#1}{#2}\big)}

\let\oldbibliography\thebibliography
\renewcommand{\thebibliography}[1]{
	\oldbibliography{#1}
	\setlength{\itemsep}{-0.5pt}
}

\sloppy

\begin{document}
	
\setlength{\medmuskip}{1\medmuskip}
\setlength{\thickmuskip}{1\thickmuskip}	

\begin{center}
	
	{\large\bf Hashing to elliptic curves of $j=0$ and Mordell--Weil groups}
	
	
	\vspace{0.5cm}
	
	{\large Koshelev Dmitrii} \footnote{
		Web page: https://www.researchgate.net/profile/Dimitri\_Koshelev\\ 
		\phantom{\hspace{0.5cm}} Email: dishport@ya.ru\\
		This work was supported by the grant RFBR № 19-31-90029$\backslash$19
	}
	
	\vspace{0.25cm}
	
	Versailles Laboratory of Mathematics, Versailles Saint-Quentin-en-Yvelines University
	Center for Research and Advanced Development, Infotecs \\
	Algebra and Number Theory Laboratory, Institute for Information Transmission Problems \\
	
	\vspace{0.25cm}		
	
\end{center}

{\bf \abstractname.} Consider an ordinary elliptic curve $E_b\!: y^2 = x^3 - b$ (of $j$-invariant $0$) over a finite field $\F{q}$ such that $\sqrt[3]{b} \notin \F{q}$. This article tries to resolve the problem of constructing a rational $\F{q}$-curve on the Kummer surface of the direct product $\t{E_b}{E_b^\prime}$, where $E_b^\prime$ is the quadratic $\F{q}$-twist of $E_b$. More precisely, we propose to search such a curve among infinite order $\F{q}$-sections of some elliptic surface of $j=0$, analysing its Mordell--Weil group. Unfortunately, we prove that it is just isomorphic to $\Z/3$.



\vspace{0.25cm}

{\bf Key words:} pairing-based cryptography, elliptic curves and surfaces of $j$-invariant $0$, Mordell--Weil groups, singular $K3$ surfaces.

\section*{Introduction}
\addcontentsline{toc}{section}{Introduction}

Many protocols of {\it pairing-based cryptography} \cite{ElMrabetJoye} use some (not necessarily injective or surjective) map \mbox{$h\!: \F{q} \to E_b(\F{q})$} (often called {\it hashing}) from a finite field $\F{q}$ (of characteristic $p > 3$) to an ordinary elliptic curve $E_b\!: y_0^2 = x_0^3 - b$, whose $j$-invariant is $0$. A review of this topic is represented, for example, in \mbox{\cite[Chapter 8]{ElMrabetJoye}}. At the moment, most really used curves $E_b$ (so-called {\it pairing-friendly curves} \mbox{\cite[\S 4]{ElMrabetJoye}}) have the restriction $\sqrt[3]{b} \notin \F{q}$, that is $2 \nmid |E_b(\F{q})|$.

In order to construct the hashing $h$ one can try to use the {\it simplified SWU} ({\it Shallue--van de Woestijne--Ulas}) {\it method}, which we explain in the introduction of \cite{Koshelev}. This method raises a very interesting algebraic geometry task (cf. \mbox{\cite[Problem 1]{Koshelev}}) of finding a rational (possibly singular) $\F{q}$-curve (and its proper $\F{q}$-parametrization) on the {\it Kummer surface} $\K_2$ (see, e.g., \cite[\S 2]{Koshelev}) of the direct product $\t{E_b}{E_b^\prime}$, where $E_b^\prime$ is the quadratic $\F{q}$-twist of $E_b$ (see, e.g., \cite[\S 2.3.6]{ElMrabetJoye}). Unfortunately, the severe requirement $\sqrt[3]{b} \notin \F{q}$ makes the task very difficult. 

Since $E_b$ is assumed to be ordinary, we have $\mod{q}{1}{3}$, that is $\omega := (-1 + \sqrt{-3})/2$ (in other words, $\omega^3 = 1$, $\omega \neq 1$) lies in $\F{q}$. Also, take any element $c \in (\F{q}^*)^3$ such that $c \notin (\F{q}^*)^2$. By the second assumption on $c$ we get the equations
$$
E_b^\prime \!: c y_1^2 = x_1^3 - b, \qquad\qquad
\K_2\!: (x_1^3 - b)t^2 = c(x_0^3 - b) \quad \subset \quad \A{3}_{(x_0,x_1,t)},
$$
where $t := y_0/y_1$. 

There is on $\K_2$ the natural elliptic fibration $(x_0,x_1,t) \mapsto t$ (so-called {\it Inose fibration} \cite{Inose}), however we do not know any its $\F{q}$-section. Instead, we apply the base change $t \mapsto t^3$ and obtain the elliptic surface
$$
\K_6\!: (x_1^3 - b)t^6 = c(x_0^3 - b) \quad \subset \quad \A{3}_{(x_0,x_1,t)}
$$
having the section $\O := (t^2: \sqrt[3]{c}: 0) \in \P^2_{(X_0:X_1:X_2)}$, where $x_0 = X_0/X_2$, $x_1 = X_1/X_2$.

The surface $\K_6$ is sometimes called {\it Kuwata surface} \cite{Kuwata} (also see \cite{Kloosterman}, \cite{KumarKuwata}, \cite{Shioda2000}, \cite{Shioda2008}). It is worth noting that $\K_2$, $\K_6$ are $K3$ surfaces (see, e.g., \mbox{\cite[\S 12]{ShiodaSchutt}}). Moreover, in accordance with \mbox{\cite[Theorem 8.1]{Shioda2008}} they are {\it singular} \cite{Inose}, \mbox{\cite[\S 13]{ShiodaSchutt}}, \cite{ShiodaInose}, that is their Picard $\overline{\F{q}}$-numbers are equal to $20$ (the highest possible one for ordinary $K3$ surfaces). By the way, the Picard $\F{q}$-number of $\K_2$ equals $8$ (see, e.g., \cite[\S 2]{Koshelev}), which is the smallest possible one for Kummer surfaces of the direct product of any two elliptic $\F{q}$-curves.

Because of Lüroth's theorem any rational $\F{q}$-curve on $\K_6$ gives (by means of $t \mapsto t^3$) the rational one on $\K_2$. In this regard, it is natural to study the {\it Mordell--Weil group} $\MW(\K_6)$ (see \mbox{\cite[\S 6]{ShiodaSchutt}}) and explicitly derive one of its non-zero elements (whose canonical height \mbox{\cite[\S 11.6]{ShiodaSchutt}} is as low as possible). This approach is deployed in \cite[\S 1]{KuwataWang}, \cite[\S 1]{Mestre}, where any elliptic curve of $j \neq 0, 1728$ is taken instead of $E_b$. 

We will denote for clarity by $\MW\big( \widebar{\K_6} \big)$ the Mordell--Weil group of all sections of $\K_6$ (not necessarily defined over $\F{q}$). According to \mbox{\cite[\S 1]{Shioda2000}}, \mbox{\cite[Lemma 6.2]{Shioda2008}} we have $\MW\big( \widebar{\K_6} \big) \simeq \Z^6 \oplus \Z/3$. In particular, the torsion subgroup is generated by any of two sections $(t^2: \omega^j \sqrt[3]{c}: 0)$, where $j \in \{1,2\}$. By contrast, we prove that $\MW(\K_6) \simeq \Z/3$, that is the Mordell--Weil $\F{q}$-rank of $\K_6$ is equal to $0$. Unfortunately, since the torsion sections lie at infinity (i.e., on the line $X_2 = 0$), we can not use them to construct the hashing $h$ by the simplified SWU method.

{\bf Acknowledgements.} The author expresses his deep gratitude to his scientific advisor M. Tsfasman and thanks A. Trepalin for the help and useful comments.

\section{Main result}
\addcontentsline{toc}{section}{Main result}

The theory of elliptic surfaces over $\P^1$ (i.e., elliptic curves over the function field in one variable $t$) is well represented, for example, in \cite{ShiodaSchutt} (and in \cite{Ulmer} for the case of a finite field $\F{q}$). By abuse of notation, we will denote an elliptic $\F{q}$-surface $S$ and its generic $\F{q}(t)$-fiber by the same letter. Besides, let us identify $S$ with its (unique) Kodaira--Néron model.

\begin{theorem}
	The Mordell--Weil group $\MW(\K_6)$ is isomorphic to $\Z/3$.
\end{theorem}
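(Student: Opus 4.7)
Since the torsion sections $(t^2 : \omega^j \sqrt[3]{c} : 0)$ of $\MW\big(\widebar{\K_6}\big)$ are manifestly $\F{q}$-rational (we have $\omega, \sqrt[3]{c} \in \F{q}$ by the standing hypotheses), it suffices to prove that the free part of $\MW(\K_6)$ vanishes. Equivalently, using $\MW(\K_6) = \MW\big(\widebar{\K_6}\big)^{\Fr_q}$ together with the cited isomorphism $\MW\big(\widebar{\K_6}\big) \simeq \Z^6 \oplus \Z/3$, the goal is to show that the Frobenius-fixed part of the rank-$6$ free sublattice is zero. The plan is to describe the Frobenius action explicitly and exhibit it as an order-$3$ automorphism without non-trivial invariants.

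First, I would exhibit a convenient family of sections of $\K_6 \to \P^1_t$. Setting $P_{j,k} := (\omega^j \sqrt[3]{b},\, \omega^k \sqrt[3]{b})$ for $j, k \in \Z/3$, a direct substitution shows that each $P_{j,k}$ identically satisfies $(x_1^3 - b)t^6 = c(x_0^3 - b)$, giving nine constant sections defined over the cubic extension $\F{q}(\sqrt[3]{b})$. Following the strategy of Kuwata and Shioda for Kuwata surfaces, one then verifies that the $P_{j,k}$, possibly supplemented by further $\F{q}(\sqrt[3]{b})$-rational sections constructed from the diagonal CM-endomorphism $[\omega]\!:(x_0,x_1,t) \mapsto (\omega x_0, \omega x_1, t)$ of $\K_6$, generate a finite-index subgroup of $\Z^6 \subset \MW\big(\widebar{\K_6}\big)$.

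The key observation is then that Frobenius coincides with the CM-multiplication $[\omega^e]$ on this sublattice. Since $\sqrt[3]{b} \notin \F{q}$, one has $\Fr_q(\sqrt[3]{b}) = \omega^e \sqrt[3]{b}$ for some fixed $e \in \{1,2\}$, and, using $\omega \in \F{q}$, a direct computation yields $\Fr_q(P_{j,k}) = P_{j+e,\,k+e} = [\omega^e]\, P_{j,k}$. Granting that this identity propagates to a full system of generators, the Frobenius-fixed lattice equals $\ker\big([\omega^e] - 1\big)$, which vanishes because $[\omega^e] - 1$ is invertible in $\End_{\overline{\F{q}}}(E_b) \otimes \Q = \Q(\omega)$. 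Hence $\rk \MW(\K_6) = 0$, and the theorem follows.

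The main obstacle is justifying the identity $\Fr_q = [\omega^e]$ on the whole rank-$6$ lattice, not merely on the constant-section sublattice. One way is to provide a Frobenius-equivariant description of $\MW\big(\widebar{\K_6}\big) \otimes \Q$ as a module over the CM-order $\Z[\omega]$, for instance via the structural results of Shioda and Kuwata relating this lattice to $\Hom(E_b, E_b^\prime) \otimes \Q$ combined with the data of the base change $t \mapsto t^3$, and then to match the Galois action on coefficients with the CM action in each summand. An alternative route, avoiding explicit Mordell--Weil generators altogether, is to compute the $\F{q}$-Picard number $\rho(\K_6)$ from the trace of Frobenius on $\ell$-adic cohomology and to apply the Shioda--Tate formula to $\K_6/\F{q}$ after identifying the Frobenius orbits among the irreducible components of the reducible fibers of $\K_6 \to \P^1_t$ at $t = 0$ and at the six points $t^6 = c$.
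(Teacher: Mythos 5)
Your reduction---show $\rk\MW(\K_6)=0$ and note that the $\Z/3$ torsion is generated by the visibly $\F{q}$-rational sections at infinity---is correct and matches the paper. But the gap you flag at the end is not a technicality, and it in fact occurs one step earlier than where you place it: the nine constant sections $P_{j,k}=(\omega^j\sqrt[3]{b},\,\omega^k\sqrt[3]{b})$ cannot generate a finite-index subgroup of the free part $\Z^6$ of $\MW\big(\widebar{\K_6}\big)$, and neither can any collection of sections defined over $\F{q}(\sqrt[3]{b})=\F{q^3}$. The paper's proof passes to the minimal Weierstrass model $\E$ of $\K_6$ and decomposes $\rk(\E)=\sum_{j=0}^{5}\rk(\E_j)$ over the sextic twists $\E_j$ via \cite[Lemma 2.1]{Top}; the geometric rank $6$ is carried entirely by the $\E_1$-, $\E_2$- and $\E_3$-parts (rank $2$ each), and the explicit generators $P_k$ of the $\E_1$-part have $\sqrt{c}$ in their $y$-coordinate, with $\sqrt{c}\in\F{q^2}\setminus\F{q}$, hence $\sqrt{c}\notin\F{q^3}$. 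Concretely, every candidate Frobenius matrix on $L_1$ listed in the paper cubes to $-\mathrm{id}$, so $\Fr^3$ acts as $-\mathrm{id}$ on a rank-$4$ chunk of the lattice (the $\E_1$- and $\E_3$-parts); that chunk contains no non-torsion $\F{q^3}$-rational sections, and your constant sections therefore span a subgroup of rank at most $2$. For the same reason your key identity $\Fr=[\omega^e]$ cannot hold on the whole rank-$6$ lattice: Frobenius acts through \emph{different} characters of $\Gal(\F{q^6}/\F{q})\simeq\Z/6$ on the different twist-pieces---of order $2$ or $6$ on the $\E_1$- and $\E_3$-parts, governed by $\Leg{4b}{q}_{\!3}$ together with the quadratic character of $c$, and of order $1$ or $3$ on the $\E_2$-part, governed by $\Leg{b}{q}_{\!3}$. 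The desired conclusion (no non-zero invariants in any piece) does survive, but only after a case analysis in which the hypothesis $\sqrt[3]{b}\notin\F{q}$ is used precisely to exclude the single dangerous case $\Leg{b}{q}_{\!3}=1$ on the $\E_2$-part.

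Your fallback route (compute the $\F{q}$-Picard number from the Frobenius trace on $\ell$-adic cohomology and apply Shioda--Tate over $\F{q}$) is viable in principle, but it is not carried out, and it would still require identifying the Frobenius orbits on fiber components together with essentially the same character computations. What the paper's decomposition buys is that everything reduces to two \emph{rational} elliptic surfaces $\E_1,\E_2$, on which explicit sections $P_k,Q_k$ with computable height pairings certify finite index, after which the Frobenius matrices can be read off directly from the Galois action on the radicals $\sqrt[3]{4b}$, $\sqrt{c}$, $\sqrt[3]{c}$ appearing in their coordinates.
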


\begin{proof}
	First of all, we transform $\K_6$ (with $\O$ as the zero section) to its globally minimal \cite[\S 8.2]{ShiodaSchutt} Weierstrass form
	$$
	\E\!: y^2 = x^3 + \left(\! \dfrac{t^6-c}{2b^2} \!\right)^{\!\!2}
	$$
	by means of the $\F{q}(t)$-isomorphism
	$$
	\begin{array}{ll}
	\varphi\!: \K_6 \simTo \E,\qquad & \inv{\varphi}\!: \E \simTo \K_6, \\[\medskipamount]
	\varphi = \begin{cases}
	x := \dfrac{t^6-c}{ b (x_0\sqrt[3]{c} - t^2x_1) },\\[\bigskipamount]
	y := \dfrac{\sqrt{-3} (x_0\sqrt[3]{c} + t^2x_1)}{-2b} \cdot x,
	\end{cases} \qquad & 
	\inv{\varphi} = \begin{cases}
	x_0 := \dfrac{ 2b^2y - \sqrt{-3}(t^6-c) }{ -2\sqrt{-3}\cdot b\sqrt[3]{c} \cdot x },\\[\bigskipamount]
	x_1 := \dfrac{ 2b^2y + \sqrt{-3}(t^6-c) }{ -2\sqrt{-3}\cdot bt^2 \cdot x }.
	\end{cases}
	\end{array}
	$$
	These formulas are verified in \cite{Magma}.
	
	Also, for $j \in \Z/6$ consider the elliptic surfaces given by globally minimal Weierstrass forms
	$$
	\E_j\!: y^2 = x^3 + t^j\! \left(\! \dfrac{t-c}{2b^2} \!\right)^{\!\!2}\!\!.
	$$
	Note that $\E_j$ is a Weierstrass form for $\E_{4-j}$ minimal at $t = \infty$. By \cite[\S 4.10]{ShiodaSchutt} the surfaces $\E_0, \E_1, \E_2$ are geometrically rational, but $\E_5$ is, in turn, $K3$ one. Besides, according to \cite[Lemma 2.1]{Top} we have the following identities between Mordell--Weil ranks:
	$$
	\rk(\E) = \sum_{j=0}^5 \rk(\E_j), \qquad\qquad
	\rk\big( \widebar{\E} \big) = \sum_{j=0}^5 \rk\big( \widebar{\E_j} \big).
	$$
	
	Let $\rho\big( \widebar{\E_j} \big)$ be the Picard $\overline{\F{q}}$-number of $\E_j$. Using Tate's algorithm \cite[\S 4.2]{ShiodaSchutt} (also see \cite{Magma}), the main theorem of \cite{OguisoShioda}, and the Shioda--Tate formula \mbox{\cite[\S 3.5]{Ulmer}}, we immediately obtain all cells of Table \ref{surfacesEj} except for $\MW\big( \widebar{\E_5} \big)$, $\rho\big( \widebar{\E_5} \big)$. Since $\rk\big( \widebar{\E} \big) = 6$, we also get $\rk\big( \widebar{\E_5} \big) = 0$ and, as a result, $\E_5$ is a singular $K3$ surface. Moreover, $\E_5$ is so-called {\it extremal} elliptic surface, hence by the row 297 of \cite[Table 2]{ShimadaZhang} we have $\MW\big( \widebar{\E_5} \big)_{tor} = 0$. Thus Table \ref{surfacesEj} is completely filled.
	
	It remains to prove that $\rk(\E_1) = \rk(\E_2) = 0$. For $k \in \Z/3$ consider the sections
	$$
	P_k := \left( \dfrac{\omega^k (t-c)}{-b\sqrt[3]{4b}},\ \dfrac{\sqrt{c}(t-c)}{2b^2} \right)\!, \qquad\qquad
	Q_k := \left( \dfrac{\omega^k \sqrt[3]{c} \cdot t}{b\sqrt[3]{b}},\ \dfrac{t^2+c t}{2b^2} \right)
	$$
	of $\E_1$ and $\E_2$ respectively. Since each triple have the same $y$-coordinate, we obviously get
	$$
	P_0 + P_1 + P_2 = \O,\qquad\qquad Q_0 + Q_1 + Q_2 = \O.
	$$ 
	
	The canonical height matrices
	$$
	\widehat{h}_{L_1} = \left(\begin{matrix}
	1/3	& -1/6 \\
	-1/6 & 1/3 
	\end{matrix}\right), \qquad\qquad
	\widehat{h}_{L_2} = \left(\begin{matrix}
	2/3	& -1/3 \\
	-1/3 & 2/3 
	\end{matrix}\right)
	$$
	on the lattices $L_1 := \langle P_0, P_1 \rangle$ and $L_2 := \langle Q_0, Q_1 \rangle$ are not hard to derive, looking at \cite[Theorem 8.6]{Shioda1990}. Instead, we use in \cite{Magma} one of Magma functions in order to reduce the amount of computations. The given matrices are non-degenerate, hence the sections $P_0, P_1$ (resp. $Q_0, Q_1$) are particularly linearly independent. 
	
	Besides, we have the following possible Frobenius actions $\Fr$ on $L_1$:
	$$
	\left(\begin{matrix}
	-1	&	0 \\
	0	&	-1 
	\end{matrix}\right), \qquad\qquad
	\left(\begin{matrix}
	1	&	-1 \\
	1	&	0 
	\end{matrix}\right), \qquad\qquad
	\left(\begin{matrix}
	0	&	1 \\
	-1	&	1 
	\end{matrix}\right)	
	$$
	if the third power residue symbol $\Leg{4b}{q}_{\!3} = 1, \omega, \omega^2$ respectively. In turn, the Frobenius $\Fr$ on $L_2$ is given by one of matrices
	$$	
	\left(\begin{matrix}
	1	&	0 \\
	0	&	1 
	\end{matrix}\right), \qquad\qquad
	\left(\begin{matrix}
	-1	&	1 \\
	-1	&	0 
	\end{matrix}\right), \qquad\qquad
	\left(\begin{matrix}
	0	&	-1 \\
	1	&	-1 
	\end{matrix}\right)
	$$
	if $\Leg{b}{q}_{\!3} = 1, \omega, \omega^2$ respectively. However the first case is ruled out by our assumption. In all remaining cases,
	$$
	\rk(\E_1) = \rk\big( L_1^\Fr \big) = 0,\qquad\qquad \rk(\E_2) = \rk\big( L_2^\Fr \big) = 0,
	$$
	where $L_1^\Fr\!$, $L_2^\Fr$ are the sublattices of $\Fr$-invariants. Thus the theorem is proved.
\end{proof}

\begin{table}[h]
	\centering
	\begin{tabular}{c|c|c|c|c}
		$\E_j$ & singular fibers & lattice $\mathrm{T}$ & $\MW\big( \widebar{\E_j} \big)$ & $\rho\big( \widebar{\E_j} \big)$ \\ \hline\hline 
		
		$\E_0$ & $\IV$, $\IV^*$ & $\rmA_2 \oplus \rmE_6$ & $\Z/3$ & \multirow{3}{*}{$10$} \\ \cline{1-4}
		$\E_1$ & $\II$, $\IV$, $\I_0^*$ & $\rmA_2 \oplus \rmD_4$ & $\Z^2$ & \\ \cline{1-4}
		$\E_2$ & $3 \cd \IV$ & $\rmA_2^{\oplus \, 3}$ & $\Z^2\oplus \Z/3$ & \\ \hline 
		$\E_5$ & $\IV$, $2 \cd \II^*$ & $\rmA_2 \oplus \rmE_8^{\, \oplus \, 2}$ & $0$ & $20$
		
	\end{tabular}
	\caption{The surfaces $\E_j$, where $\E_0 \simeq_{\F{q}(t)} \E_4$, $\E_1 \simeq_{\F{q}(t)} \E_3$}
	\label{surfacesEj}
\end{table}

\begin{remark}
	As we see in the proof, if $\Leg{b}{q}_{\!3} = 1$, then $L_2^\Fr = L_2$. At the same time, since $\rk\big( L_1^\Fr \big) = 0$ as before, we have $\rk(\K_6) = 2$.
\end{remark}

Since the base change $t \mapsto t^3$ obviously transforms an affine (i.e., $X_2 \neq  0$) $\F{q}$-section of $\K_2$ to that of $\K_6$, we also establish

\begin{corollary}
	The elliptic $\F{q}$-surface $\K_2$ is not Jacobian, that is it has no an $\F{q}$-section.
\end{corollary}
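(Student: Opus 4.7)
The plan is to deduce the corollary directly from the theorem by pulling back a hypothetical $\F{q}$-section of $\K_2$ through the cubic base change $t \mapsto t^3$, supplemented by a small order-of-vanishing computation to rule out sections that lie at infinity on each fiber.

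First I would suppose, for contradiction, that $\K_2$ admits an $\F{q}$-section $\sigma$. After writing $\K_2$ projectively in $(X_0:X_1:X_2)$ as $X_1^3 t^2 - c X_0^3 = b(t^2-c) X_2^3$, I would split into two cases according to whether $\sigma$ generically lies in the line at infinity $\{X_2 = 0\}$ of each cubic fiber or not.

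In the affine case I would compose $\sigma$ with $t \mapsto t^3$, obtaining an affine $\F{q}$-section of $\K_6$, since the base change only affects the parameter $t$ and preserves the coordinate $X_2$. This contradicts the theorem: $\MW(\K_6) \simeq \Z/3$, and its three elements are the sections $(t^2 : \omega^j \sqrt[3]{c} : 0)$ for $j \in \{0,1,2\}$, all lying in $\{X_2 = 0\}$, so $\K_6$ admits no affine $\F{q}$-section at all.

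For the infinity case, restricting the projective equation of $\K_2$ to $\{X_2 = 0\}$ gives $X_1^3 t^2 = c X_0^3$, so the function $f := X_1/X_0$ would have to satisfy $f(t)^3 = c/t^2$ in $\overline{\F{q}}(t)$. Since $3\,\ord_0(f) = -2$ cannot hold for an integer $\ord_0(f)$, no such $f$ exists over any field, and $\K_2$ admits no section contained in $\{X_2 = 0\}$. By contrast, the analogous equation $f^3 = c/t^6$ on $\K_6$ does admit the rational cube roots $\omega^j \sqrt[3]{c}/t^2$, and these are exactly the torsion sections of $\K_6$ which are created only by the cubic base change. The only real difficulty will be the bookkeeping of the two cases; no input beyond the theorem and the explicit defining equations is required.
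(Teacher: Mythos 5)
Your proof is correct and follows essentially the same route as the paper: the paper's entire argument is the one-line observation that the base change $t \mapsto t^3$ carries an affine $\F{q}$-section of $\K_2$ to an affine $\F{q}$-section of $\K_6$, which cannot exist because all three elements of $\MW(\K_6) \simeq \Z/3$ lie on the line $X_2 = 0$. Your extra order-of-vanishing computation ($3\,\ord_0(f) = -2$ has no integer solution) ruling out a hypothetical section of $\K_2$ contained in $\{X_2 = 0\}$ is a correct and worthwhile supplement to a case the paper leaves implicit.
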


\section{Further questions}
\addcontentsline{toc}{section}{Further questions}

Let us shortly discuss what other Jacobian elliptic $\F{q}$-fibrations can be potentially used to construct a rational $\F{q}$-curve on the Kummer surface $\K_2$. First, it is very natural to formulate

\begin{problem}
	Is there a number $n \in \N$ such that the Mordell--Weil group $\MW(\K_{6n})$ of the elliptic surface
	$$
	\K_{6n}\!: (x_1^3 - b)t^{6n} = c(x_0^3 - b) \quad \subset \quad \A{3}_{(x_0,x_1,t)}
	$$
	$($with $(t^{2n}: \sqrt[3]{c}: 0)$ as the zero section$)$ is of non-zero rank?
\end{problem}

The base change $t \mapsto t^{3n}$ allows to transfer rational $\F{q}$-curves on $\K_{6n}$ to rational ones on $\K_2$. As well as for $\K_6$ we verify in \cite{Magma} that 
$$
y^2 = x^3 + \left(\! \dfrac{t^{6n}-c}{2b^2} \!\right)^{\!\!2}
$$
is a globally minimal Weierstrass form for $\K_{6n}$. Therefore the arithmetic genus of its Kodaira--Néron model is equal to $2n$ and hence for $n>1$ we deal with a surface of Kodaira dimension one.

We also can consider elliptic fibrations immediately on $\K_2$. All of them are classified (without explicit formulas) in \mbox{\cite[Table 1.3]{Nishiyama}} over an algebraically closed field. This is a good question which ones are Jacobian $\F{q}$-fibrations. Finally, one may wonder about the existence of another dominant rational maps from (elliptic) $K3$ surfaces onto $\K_2$. This topic is highlighted, for example, in \mbox{\cite[\S 3]{Ma}}.

\addcontentsline{toc}{section}{References}


\begin{thebibliography}{100}{}{\footnotesize
		
\bibitem{ElMrabetJoye} El Mrabet N., Joye M. {\it Guide to pairing-based cryptography.} --- New York.: Chapman and Hall, 2016.	

\bibitem{Koshelev} Koshelev D. {\it Finite field mapping to elliptic curves of $j$-invariant $1728$}. // IACR Cryptology ePrint Archive, 2019.

\bibitem{Inose} Inose H. {\it Defining equations of singular $K3$ surfaces and a notion of isogeny}. // Proceedings of the International Symposium on Algebraic Geometry, 1977. P. 495--502.

\bibitem{Kuwata} Kuwata M. {\it Elliptic $K3$ surfaces with given Mordell--Weil rank}. // Rikkyo Daigaku Sugaku Zasshi, 2000. Vol. 49(1). P. 91--100.

\bibitem{Kloosterman} Kloosterman R. {\it Explicit sections on Kuwata's elliptic surfaces}. // arXiv preprint, 2005.

\bibitem{KumarKuwata} Kumar A., Kuwata M. {\it Elliptic $K3$ surfaces associated with the product of two elliptic curves: Mordell--Weil lattices and their fields of definition}. // Nagoya Mathematical Journal, 2017. Vol. 228. P. 124--185.

\bibitem{Shioda2000} Shioda T. {\it A note on $K3$ surfaces and sphere packings}. // Proceedings of the Japan Academy, 2000. Vol. 76(5). P. 68--72.

\bibitem{Shioda2008} Shioda T. {\it $K3$ surfaces and sphere packings}. // Journal of the Mathematical Society of Japan, 2008. Vol. 60(4). P. 1083--1105.

\bibitem{ShiodaSchutt} Shioda T., Schütt M. {\it Elliptic surfaces}. // arXiv preprint, 2010.

\bibitem{ShiodaInose} Shioda T., Inose H. {\it On singular $K3$ surfaces}. // Complex analysis and algebraic geometry. 1977. P. 119--136.

\bibitem{KuwataWang} Kuwata M., Wang L. {\it Topology of rational points on isotrivial elliptic surfaces}. // International Mathematics Research Notices, 1993. Vol. 1993(4). P. 113--123.

\bibitem{Mestre} Mestre J.-F. {\it Rang de courbes elliptiques d'invariant donné}. // Comptes Rendus de l'Académie des Sciences - Series I - Mathematics, 1992. Vol. 314(12). P. 919--922.

\bibitem{Ulmer} Ulmer D. {\it Elliptic curves over function fields}. // Arithmetic of L-functions, 2011. P. 211--280.

\bibitem{Magma} Koshelev D. {\it Magma code}. URL: https://github.com/dishport/Hashing-to-elliptic-curves-of-j-0-and-Mordell-Weil-groups.

\bibitem{Top} Chahal J., Meijer M., Top J. {\it Sections on certain $j=0$ elliptic surfaces}. // arXiv preprint, 1999.

\bibitem{OguisoShioda} Oguiso K., Shioda T. {\it The Mordell--Weil lattice of a rational elliptic surface}. // Rikkyo Daigaku Sugaku Zasshi, 1991. Vol. 40(1). P. 83--99.

\bibitem{ShimadaZhang} Shimada I., Zhang D. {\it Classification of extremal elliptic $K3$ surfaces and fundamental groups of open $K3$ surfaces.} // Nagoya Mathematical Journal, 2001. Vol. 161. P. 23--54.

\bibitem{Shioda1990} Shioda T. {\it On the Mordell--Weil lattices}. // Rikkyo Daigaku Sugaku Zasshi, 1990. Vol. 39(2). P. 211--240.

\bibitem{Nishiyama} Nishiyama K. {\it The Jacobian fibrations on some $K3$ surfaces and their Mordell--Weil groups.} // Japanese Journal of Mathematics, 1996. Vol. 22(2). P. 293--347.

\bibitem{Ma} Ma S. {\it On $K3$ surfaces which dominate Kummer surfaces.} // Proceedings of the American Mathematical Society, 2013. Vol. 141(1). P. 131--137.
		
}\end{thebibliography}
\end{document}